\newcommand\+{\;\lower\plusheight\hbox{$+$}\;}
\newcommand\lldots{\;\lower\plusheight\hbox{$\cdots$}\;}
\newtheorem{Theorem}{Theorem}[section]
\newtheorem{Lemma}[Theorem]{Lemma}
\newdimen\plusheight
\newdimen\minusheight
\newdimen\cdotsheight
\begin{document}

\title[Representations by $2k$-ary quadratic forms]{Representations of integers by certain $2k$-ary quadratic forms}
\author{Dongxi Ye}
\address{
Department of Mathematics, University of Wisconsin\\
480 Lincoln Drive, Madison, Wisconsin, 53706 USA\\
E-mail: lawrencefrommath@gmail.com}
\subjclass[2010]{11F11, 11F20, 11F30, 11E25.}
\keywords{quadratic forms, representations of integers, sums of squares, theta functions, eta functions, modular forms.}
\maketitle
\begin{abstract}
Suppose $k$ is a positive integer. In this work, we establish formulas for for the number of representations of integers by the quadratic forms
$$
x_{1}^{2}+\cdots+x_{k}^{2}+m\left(x_{k+1}^{2}+\cdots+x_{2k}^{2}\right)
$$
for $m\in\{2,4\}$. 
\end{abstract}
\noindent
\numberwithin{equation}{section}
\allowdisplaybreaks
\section{Introduction}
\label{intro}

In the long history of number theory, one of the classical problems is to give an explicit formula for the number of ways that one can express a positive integer $n$ as a sum of $2k$ squares, that is, the number of integral solutions of
$$
x_{1}^{2}+x_{2}^{2}+\cdots+x_{2k-1}^{2}+x_{2k}^{2}=n,
$$
which we denote by $\mathcal{R}_{k}(n)$. It is known from the theory of modular forms that in general, 
$$
\mathcal{R}_{k}(n)=\Delta_{k}(n)+\mathcal{E}_{k}(n)
$$
where $\Delta_{k}(n)$ is a divisor function and $\mathcal{E}_{k}(n)$ is a function of order substantially lower than that of $\Delta_{k}(n)$. Formulas for $\mathcal{R}_{k}(n)$ in this fashion have been found and studied by various mathematicians. For $k=1,\,2,\,3$ and $4$, i.e., sums of $2,\,4,\,6$ and $8$ squares, (reformulated) formulas for $\mathcal{R}_{k}(n)$ are originally due to Jacobi \cite{jaco},
\begin{align}
\label{R1}
\mathcal{R}_{1}(n)&=4\sum_{d|n}\left(\frac{-4}{d}\right),\\
\mathcal{R}_{2}(n)&=8\sum_{d|n}d-32\sum_{d|\frac{n}{4}}d,\\
\mathcal{R}_{3}(n)&=-4\sum_{d|n}\left(\frac{-4}{d}\right)d^{2}+16\sum_{d|n}\left(\frac{-4}{n/d}\right)d^{2},\\
\label{R4}
\mathcal{R}_{4}(n)&=16\sum_{d|n}d^{3}-32\sum_{d|\frac{n}{2}}d^{3}+256\sum_{d|\frac{n}{4}}d^{3}
\end{align}
where, here and throughout this work, $\left(\frac{\cdot}{\cdot}\right)$ denotes the Kronecker symbol. The result for $k=5$, i.e., sum of $10$ squares, was due (without proof) in part to Eisenstein \cite{eis}, and fully described (without proof) by Liouville \cite{liou}. The results for $1\leq k\leq9$ were all proved by Glaisher \cite{gla}. In around 1916, this classical problem was ``completely'' solved (without proof) by Ramanujan \cite{ram}, \cite[Eqs. (145)--(147)]{ram2}. To state Ramanujan's result, we need the well-known Dedekind eta function
$$
\eta(\tau):=q^{1/24}\prod_{j=1}^{\infty}(1-q^{j})
$$
where, here and throughout this paper, $\tau$ denotes a complex number with positive imaginary part and $q=e^{2\pi i\tau}$. For brevity, throughout this work, we write $\eta_{m}$ for $\eta(m\tau)$ for any positive integer $m$. In addition, we define $\chi_{D}(\cdot)$ to be the Kronecker symbol $\left(\frac{D}{\cdot}\right)$, and define $\sigma_{k}(n)$, $\sigma_{k,\chi_{D}}^{\infty}(n)$ and $\sigma_{k,\chi_{D}}^{0}(n)$ to be the divisor functions
\begin{align*}
\sigma_{k}(n)&=\sum_{d|n}d^{k},\\
\sigma_{k,\chi_{D}}^{\infty}(n)&=\sum_{d|n}\chi_{D}(d)d^{k},\\
\sigma_{k,\chi_{D}}^{0}(n)&=\sum_{d|n}\chi_{D}(n/d)d^{k}
\end{align*}
with the convention that they are defined to be 0 if $n$ is not a positive integer. Now we reformulate and summarize Ramanujan's result in Theorem \ref{rammor} below.

\begin{Theorem}[Ramanujan]
\label{rammor}
Suppose $k$ is a positive integer. Then there are unique rational numbers $c_{j,k}$ depending on $j$ and $k$ such that
\begin{align*}
&\mathcal{R}_{k}(n)\\
&=\begin{cases}{\displaystyle-\frac{2k}{B_{k}}\left(\frac{(-1)^{k/2}\sigma_{k-1}(n)-(1+(-1)^{k/2})\sigma_{k-1}(n/2)+2^{k}\sigma_{k-1}(n/4)}{-1+2^{k}}\right)}&\mbox{if $k$ is even,}\\{\displaystyle-\frac{2k}{B_{k,4}}\left(\frac{\sigma_{k-1,\chi_{-4}}^{\infty}(n)+(-1)^{(k-1)/2}2^{k-1}\sigma_{k-1,\chi_{-4}}^{0}(n/4)}{1+\delta_{k,1}}\right)}&\mbox{if $k$ is odd}
\end{cases}\\
&\quad+\sum_{1\leq j\leq(k-1)/4}c_{j,k}a_{j,k}(n)
\end{align*}
where, here and throughout this paper, $\delta_{\cdot,\cdot}$ denotes the Kronecker delta, $B_{k}$ is the $k$th ordinary Bernoulli number, $B_{k,4}$ is the $k$th generalized Bernoulli number of order $4$ defined via
\begin{equation*}
\frac{t}{e^{4t}-1}\sum_{j=1}^{4}\chi_{-4}(j)e^{jt}=\sum_{n=0}^{\infty}B_{n,4}\frac{t^{n}}{n!},
\end{equation*}
and the numbers $a_{j,k}(n)$ are defined via
$$
\sum_{n=0}^{\infty}a_{j,k}(n)q^{n}=\frac{\eta_{2}^{10k}}{(\eta_{1}\eta_{4})^{4k}}\times\frac{(\eta_{1}\eta_{4})^{24j}}{\eta_{2}^{48j}}.
$$
\end{Theorem}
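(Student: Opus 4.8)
The plan is to realize the generating function
$$
\Theta_k(\tau):=\sum_{n\geq 0}\mathcal{R}_k(n)q^n=\theta(\tau)^{2k},\qquad \theta(\tau):=\sum_{n\in\mathbb{Z}}q^{n^2},
$$
as a holomorphic modular form on $\Gamma_0(4)$ and then split it along the decomposition of the ambient space into Eisenstein and cuspidal parts. The key structural input is the classical eta-product evaluation $\theta(\tau)=\eta_2^{5}/(\eta_1\eta_4)^{2}$, which gives $\Theta_k=\eta_2^{10k}/(\eta_1\eta_4)^{4k}\in M_k(\Gamma_0(4),\psi_k)$, where $\psi_k$ is the trivial character when $k$ is even and $\chi_{-4}$ when $k$ is odd; in particular $\Theta_k$ has no zero on the upper half-plane. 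One then identifies $t(\tau):=(\eta_1\eta_4)^{24}/\eta_2^{48}$ as a modular function on $\Gamma_0(4)$ and, from Ligozat's order-at-cusps formula for eta quotients, computes its divisor on $X_0(4)$ to be $[\infty]+[0]-2[1/2]$ (simple zeros at the cusps $\infty$ and $0$, a double pole at $1/2$) and checks that $t$ is invariant under the Fricke involution $W_4$, which swaps $\infty$ with $0$ and fixes $1/2$. Because $\Theta_k$ is nonzero at $\infty$ and $0$ and vanishes to order $k/2$ at $1/2$, the eta quotient $\Theta_k\,t^{\,j}$ occurring in the theorem is holomorphic on $\mathbb{H}$, has $q$-expansion $q^{\,j}+O(q^{\,j+1})$, and lies in $S_k(\Gamma_0(4),\psi_k)$ precisely for $1\leq j\leq\lfloor(k-1)/4\rfloor$; these forms are linearly independent since they have distinct orders at $\infty$.

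For the Eisenstein part I would use the standard basis of the Eisenstein subspace of $M_k(\Gamma_0(4),\psi_k)$ — for $k$ even the forms $E_k(\tau),E_k(2\tau),E_k(4\tau)$ (with the familiar quasimodular substitutes $E_2(\tau)-2E_2(2\tau)$, $E_2(\tau)-4E_2(4\tau)$ when $k=2$), and for $k$ odd the two weight-$k$ Eisenstein series of level $4$ attached to the character pairs $(\mathbf 1,\chi_{-4})$ and $(\chi_{-4},\mathbf 1)$ — and extract the unique combination $\mathcal G_k$ whose constant terms at all cusps agree with those of $\Theta_k$. Since the Eisenstein part of a modular form is pinned down by its constant terms at the cusps, this yields $\Theta_k-\mathcal G_k\in S_k(\Gamma_0(4),\psi_k)$, and reading off the Fourier coefficients of $\mathcal G_k$ produces exactly the first displayed expression in the theorem: the constants $-2k/B_k$ and $-2k/B_{k,4}$ are the normalizations forcing the constant term to equal $\mathcal R_k(0)=1$, the powers of $2$ together with the shifted divisor sums $\sigma_{k-1}(n/2)$, $\sigma_{k-1}(n/4)$ and $\sigma_{k-1,\chi_{-4}}^{0}(n/4)$ record the behavior of $\Theta_k$ at the cusps $0$ and $1/2$, and $1+\delta_{k,1}$ is the correction at the exceptional weight $k=1$.

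To finish, one shows that the cuspidal remainder $\Theta_k-\mathcal G_k$ is spanned by the $\lfloor(k-1)/4\rfloor$ eta quotients above. Dividing by $\Theta_k$ (legitimate since $\Theta_k$ is nowhere zero on $\mathbb{H}$), the ratio $(\Theta_k-\mathcal G_k)/\Theta_k$ is a modular function on $\Gamma_0(4)$ that is holomorphic away from the cusp $1/2$, vanishes at $\infty$ and at $0$ (the numerator is cuspidal there while the denominator is not), and has pole order at $1/2$ bounded in terms of $k$. As $\Theta_k$ is a $W_4$-eigenform, so are $\mathcal G_k=\mathrm{Eis}(\Theta_k)$ and $\Theta_k-\mathcal G_k$, with the same eigenvalue, so the ratio is $W_4$-invariant; and a $W_4$-invariant modular function on $\Gamma_0(4)$ that is holomorphic away from $1/2$ is a polynomial in $t$. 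The vanishing at $\infty$ kills the constant term and the pole-order bound at $1/2$ caps the degree at $\lfloor(k-1)/4\rfloor$, so $(\Theta_k-\mathcal G_k)/\Theta_k=\sum_{1\leq j\leq\lfloor(k-1)/4\rfloor}c_{j,k}\,t^{\,j}$. Multiplying back by $\Theta_k$ gives the stated formula with $a_{j,k}(n)$ the Fourier coefficients of $\Theta_k t^{\,j}$; uniqueness is immediate from the linear independence above, and rationality of the $c_{j,k}$ follows because $\Theta_k$ and $\mathcal G_k$ have rational Fourier coefficients while the transition matrix from $\Theta_k t,\dots,\Theta_k t^{\lfloor(k-1)/4\rfloor}$ to powers of $q$ is unitriangular over $\mathbb{Z}$.

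The step I expect to demand the most care is making the Eisenstein projection explicit — obtaining the precise powers of $2$, the $\sigma^{\infty}$ versus $\sigma^{0}$ divisor sums, and the Bernoulli normalizations correctly in both parities and in the small cases — together with the pole-order bookkeeping at the cusp $1/2$ needed to pin the degree of the $W_4$-invariant ratio at $\lfloor(k-1)/4\rfloor$. As a consistency check, the cases $k=1,2,3,4$, where the cuspidal sum is empty, should reproduce \eqref{R1}--\eqref{R4}.
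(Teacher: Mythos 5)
The paper does not actually prove this theorem itself---it attributes the proof to Mordell and Cooper---but your argument is precisely the level-$4$ instance of the method the paper uses in Section 3 for its own main theorem: match constant terms at the cusps with an explicit Eisenstein combination, divide by the nowhere-vanishing power of $\theta$, and identify the quotient as a polynomial in the Hauptmodul $t$ of the genus-zero curve $X_0(4)^{+}$, with the degree bound $\lfloor (k-1)/4\rfloor$ coming from the order of vanishing of the Eisenstein part at the cusp $1/2$. Your outline is correct, and the one delicate point (which you already flag) is the analogue of the paper's Lemma 3.2: verifying that $\mathrm{ord}_{1/2}(\mathcal{G}_k)$ is exactly $2$ or $1$ for $k\equiv 0,2\pmod 4$ (and the corresponding fractional orders for odd $k$), which is what makes the degree bound tight.
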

Theorem \ref{rammor} was proved first by Mordell \cite{mordell} utilizing the theory of modular forms. An elementary proof was given by Cooper in \cite{cooper} by making skillful use of Ramanujan's ${}_{1}\psi_{1}$ formula. 

In recent work \cite{cky}, Cooper, Kane and the author extended Ramanujan's results and determined equivalently the number of integral solutions of 
$$
x_{1}^{2}+\cdots+x_{k}^{2}+p\left(x_{k+1}^{2}+\cdots+x_{2k}^{2}\right)=n
$$
for $p\in\{3,7,11,23\}$ along the ``divisor function + lower order term'' fashion. Motivated by the work of Ramanujan, and Cooper \emph{et al.}, in this work we aim to establish analogous formulas for the number of representations of integers by the quadratic forms
$$
x_{1}^{2}+\cdots+x_{k}^{2}+m\left(x_{k+1}^{2}+\cdots+x_{2k}^{2}\right)
$$
for $m\in\{2,4\}$. 

This work is organized as follows. In Section \ref{defm}, we state our main results, and as illustrations, we also present some examples that follow from the general case we obtain. Proofs will be given in Section~\ref{pfs}. In the last section, we conclude this work with some remarks, which explain the existence of these Ramanujan-Mordell type formulas.

\section{Statement of Results}
\label{defm}

Let us denote by $r(1^{k}m^{k};n)$ the number of integral solutions of the equation
$$
x_{1}^{2}+\cdots+x_{k}^{2}+m\left(x_{k+1}^{2}+\cdots+x_{2k}^{2}\right)=n.
$$
The main results of this work are summarized in the following theorem.

\begin{Theorem}
\label{main}
Suppose $k$ is a positive integer.
\begin{enumerate}
\item{Let $\ell_{2}$ be defined by
$$
\ell_{2}=\begin{cases}\frac{k-1}{2}&\mbox{if $k$ is odd,}\\
\frac{k-2}{2}&\mbox{if $k$ is even.}
\end{cases}
$$
Then there are unique rational numbers $c_{j,k,2}$ depending on $j$ and $k$ such that
\begin{align*}
&r(1^{k}2^{k};n)\\
&=\begin{cases}
{\displaystyle-\frac{k}{B_{k,8}}\left(\frac{2\sigma_{k-1,\chi_{-2}}^{\infty}(n)+2(-8)^{(k-1)/2}\sigma_{k-1,\chi_{-2}}^{0}(n)}{1+\delta_{k,1}}\right)}&\mbox{if $k$ is odd,}\\
{\displaystyle-\frac{k}{B_{k}}\left(\frac{(-1)^{\frac{k}{2}}\sigma_{k-1}(n)-(-1)^{\frac{k}{2}}\sigma_{k-1}\left(\frac{n}{2}\right)-2^{\frac{k}{2}}\sigma_{k-1}\left(\frac{n}{4}\right)+8^{\frac{k}{2}}\sigma_{k-1}\left(\frac{n}{8}\right)}{2^{\frac{k}{2}-1}(2^{k}-1)}\right)}&\mbox{if $k$ is even}
\end{cases}\nonumber\\
&\quad+\sum_{j=1}^{\ell_{2}}c_{j,k,2}a_{j,k,2}(n)\nonumber
\end{align*}
where $\chi_{-2}$, $\sigma_{k}(n)$, $\sigma_{k,\chi_{-2}}^{\infty}(n)$ and $\sigma_{k,\chi_{-2}}^{0}(n)$ are as defined in Section \ref{intro}, and $B_{k,8}$ is the $k$th generalized Bernoulli of order $8$ defined via
\begin{equation*}
\frac{t}{e^{8t}-1}\sum_{j=1}^{8}\chi_{-2}(j)e^{jt}=\sum_{n=0}^{\infty}B_{n,8}\frac{t^{n}}{n!},
\end{equation*}
and the numbers $a_{j,k,2}(n)$ are given by
$$
\sum_{n=0}^{\infty}a_{j,k,2}(n)q^{n}=\frac{(\eta_{2}\eta_{4})^{3k}}{(\eta_{1}\eta_{8})^{2k}}\times\left(\frac{\eta_{1}\eta_{8}}{\eta_{2}\eta_{4}}\right)^{8j}.
$$
}

\item{Let $\ell_{4}$ be defined by
$$
\ell_{4}=\begin{cases}k-2&\mbox{if $k$ is odd,}\\
 k-1&\mbox{if $k$ is even.}
\end{cases}
$$
Then there are unique rational numbers $c_{j,k,4}$ depending on $j$ and $k$ such that
\begin{align*}
&r(1^{k}4^{k};n)\\
&=\begin{cases}
\quad{2\sigma_{0}^{\infty}(n)-2\sigma_{0}^{\infty}\left(\frac{n}{2}\right)+4\sigma_{0}^{\infty}\left(\frac{n}{4}\right)}&\mbox{if $k=1$,}\\
{\displaystyle-\frac{k}{B_{k,4}}\left({{(-1)^{\frac{k+1}{2}}\sigma_{k-1}^{\infty}(n)-(-1)^{\frac{k+1}{2}}\sigma_{k-1}^{\infty}\left(\frac{n}{2}\right)+2\sigma_{k-1}^{\infty}\left(\frac{n}{4}\right)\quad\atop\quad-(-1)^{\frac{k+1}{2}}\sigma_{k-1}^{0}(n)+2^{k-1}\sigma_{k-1}^{0}\left(\frac{n}{2}\right)-2^{2k-1}\sigma_{k-1}^{0}\left(\frac{n}{4}\right)}}\right)}&\mbox{if $k\geq3$ and $k$ is odd,}\\
{\displaystyle-\frac{k}{B_{k}}\left(\frac{(-1)^{\frac{k}{2}}\sigma_{k-1}(n)-(-1)^{\frac{k}{2}}\sigma_{k-1}\left(\frac{n}{2}\right)-2^{k}\sigma_{k-1}\left(\frac{n}{8}\right)+4^{k}\sigma_{k-1}\left(\frac{n}{16}\right)}{2^{k-1}(2^{k}-1)}\right)}&\mbox{if $k$ is even}
\end{cases}\nonumber\\
&\quad+\sum_{j=1}^{\ell_{4}}c_{j,k,4}a_{j,k,4}(n)
\end{align*}
where the divisor functions $\sigma_{k}(n)$, $\sigma_{k,\chi_{-4}}^{\infty}(n)$ and $\sigma_{k,\chi_{-4}}^{0}(n)$ are as defined in Section \ref{intro}, and the numbers $a_{j,k,4}(n)$ are given by
$$
\sum_{n=0}^{\infty}a_{j,k,4}(n)q^{n}=\frac{(\eta_{2}\eta_{8})^{5k}}{(\eta_{1}\eta_{4}^{2}\eta_{16})^{2k}}\times\frac{(\eta_{1}\eta_{4}\eta_{16})^{4j}}{(\eta_{2}\eta_{8})^{6j}}.
$$
}
\end{enumerate}
\end{Theorem}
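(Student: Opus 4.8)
The plan is to interpret the generating function $\vartheta(\tau)^{k}\vartheta(m\tau)^{k}$, where $\vartheta(\tau)=\sum_{n\in\mathbb{Z}}q^{n^{2}}=\eta_{2}^{5}/(\eta_{1}\eta_{4})^{2}$, as a modular form of weight $k$ on the congruence subgroup $\Gamma_{0}(N)$ with $N=4m^{2}$ (so $N=16$ when $m=2$ and $N=64$ when $m=4$), possibly with a quadratic character. First I would verify that $\sum_{n}r(1^{k}m^{k};n)q^{n}=\vartheta(\tau)^{k}\vartheta(m\tau)^{k}$ lies in $M_{k}(\Gamma_{0}(N),\chi)$ for the appropriate $\chi$ (trivial when $k$ is even, $\chi_{-2}$ or $\chi_{-4}$ when $k$ is odd), using the standard transformation law for $\vartheta$ and the level-lowering/raising behaviour of $\eta$ quotients. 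This places the problem inside a finite-dimensional vector space, so everything reduces to matching finitely many Fourier coefficients once a spanning set is identified.

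Next I would produce the Eisenstein part. The divisor-function expression in each case of the theorem is, up to the normalising constant $-k/B_{k,\ast}$, exactly the Eisenstein series in $M_{k}(\Gamma_{0}(N),\chi)$ whose constant term equals that of $\vartheta^{k}\vartheta_{m}^{k}$ at the cusp $\infty$ and which has the correct behaviour at the other cusps; concretely I would write it as an explicit $\mathbb{Q}$-linear combination of the standard Eisenstein series $E_{k,\chi_{1},\chi_{2}}(t\tau)$ attached to pairs of Dirichlet characters with $\chi_{1}\chi_{2}=\chi$ and conductors dividing $N$. The coefficients $2\sigma_{k-1,\chi_{-2}}^{\infty}(n)+2(-8)^{(k-1)/2}\sigma_{k-1,\chi_{-2}}^{0}(n)$, etc., are then read off from the $q$-expansions of these $E_{k,\chi_{1},\chi_{2}}$, and one checks the constant term and the total count of cusp conditions matches $\dim E_{k}(\Gamma_{0}(N),\chi)$. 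The residual form $\vartheta^{k}\vartheta_{m}^{k}-(\text{Eisenstein part})$ is then a cusp form in $S_{k}(\Gamma_{0}(N),\chi)$.

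For the cuspidal part I would exhibit a spanning set of $S_{k}(\Gamma_{0}(N),\chi)$ consisting of eta quotients. The quotients proposed in the theorem, namely $f_{m}(\tau)=\dfrac{(\eta_{2}\eta_{4})^{3k}}{(\eta_{1}\eta_{8})^{2k}}$ for $m=2$ and $\dfrac{(\eta_{2}\eta_{8})^{5k}}{(\eta_{1}\eta_{4}^{2}\eta_{16})^{2k}}$ for $m=4$, together with their ``twists'' by powers of the single eta quotient $t_{m}(\tau)$ — which is $\left(\tfrac{\eta_{1}\eta_{8}}{\eta_{2}\eta_{4}}\right)^{8}$ for $m=2$ and $\tfrac{(\eta_{1}\eta_{4}\eta_{16})^{4}}{(\eta_{2}\eta_{8})^{6}}$ for $m=4$ — should all be holomorphic modular forms of weight $k$ and character $\chi$ on $\Gamma_{0}(N)$: I would confirm this with the Ligozat valence formula, computing the order of vanishing of $f_{m}\,t_{m}^{j}$ at every cusp of $\Gamma_{0}(N)$ and checking it is nonnegative for $0\le j\le\ell_{m}$ and strictly positive at every cusp except that $f_{m}$ (i.e.\ $j$ small) governs the leading behaviour at $\infty$. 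A dimension count — showing $\ell_{m}=\dim S_{k}(\Gamma_{0}(N),\chi)$ in each case, with the small-$k$ exceptions ($k=1$ for $m=4$, and the $\delta_{k,1}$ adjustments) handled separately — together with a Wronskian/leading-term argument showing the $a_{j,k,m}$ are linearly independent, establishes that $\{f_{m}t_{m}^{j}:1\le j\le\ell_{m}\}$ is a basis of the cusp space. Then the constants $c_{j,k,m}$ are uniquely determined by equating the first $\ell_{m}$ nonconstant Fourier coefficients, which proves both existence and uniqueness.

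The main obstacle I expect is the cusp analysis: computing $\dim S_{k}(\Gamma_{0}(N),\chi)$ cleanly as a function of $k$ for $N=16$ and $N=64$ (with nontrivial character in the odd-$k$ case) and verifying it equals $\ell_{m}$ for all $k$ simultaneously, while also checking via Ligozat's criterion that every one of the eta quotients $f_{m}t_{m}^{j}$ is genuinely holomorphic at all cusps — the number of cusps of $\Gamma_{0}(64)$ and the order formula make this bookkeeping delicate, and the low-weight cases where Eisenstein/cusp dimensions behave irregularly will need individual attention. A secondary subtlety is pinning down exactly which Eisenstein combination has the stated divisor-sum coefficients, i.e.\ verifying the precise constants $-k/B_{k,8}$, $-k/B_{k}$, the powers $(-8)^{(k-1)/2}$, $2^{k/2-1}(2^{k}-1)$, and the sign pattern $(-1)^{k/2}$, which amounts to a careful evaluation of Eisenstein $q$-expansions and the behaviour of $\vartheta^{k}\vartheta_{m}^{k}$ at each cusp.
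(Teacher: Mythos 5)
Your overall architecture (Eisenstein part plus a cusp part spanned by the eta quotients $f_m t_m^{\,j}$) matches the shape of the identity, but the step you rely on to control the cusp part --- ``a dimension count showing $\ell_m=\dim S_k(\Gamma_0(N),\chi)$'' --- is false, and this is a genuine gap, not bookkeeping. Take $m=2$ and $k$ even, where the natural level is $\Gamma_0(8)$ (not $\Gamma_0(16)$; note $\theta(\tau)\theta(2\tau)$ already lives on $\Gamma_0(8)$, and your choice $N=4m^2$ only inflates the spaces further). The group $\Gamma_0(8)$ has index $12$, genus $0$, four cusps and no elliptic points, so $\dim S_k(\Gamma_0(8))=k-3$ for even $k\ge4$, whereas $\ell_2=\tfrac{k-2}{2}$; already at $k=6$ you have $3\ne2$, and at your level $N=16$ the discrepancy is worse ($\dim S_6(\Gamma_0(16))=7$). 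So the forms $f_2t_2^{\,j}$, $1\le j\le\ell_2$, span only a proper subspace of the cusp space, and holomorphy plus weight plus character cannot force the cuspidal part of $\theta^k\theta_2^k-F_{k,2}$ to lie in that subspace. Matching the first $\ell_m$ Fourier coefficients then proves nothing; to finish along your lines you would need to verify agreement up to the Sturm bound, which is a separate computation for each fixed $k$ and does not yield the theorem uniformly in $k$.

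The missing idea is the extra symmetry under the Fricke involution $W_{4m}=\begin{pmatrix}0&1\\-4m&0\end{pmatrix}$. The paper observes that $F_{k,m}/(\theta(\tau)\theta(m\tau))^k$ and the Hauptmodul $1/x_m$ are both invariant under the extended group $\Gamma_0(4m)^{+}$, whose modular curve has genus zero and only two (resp.\ three) cusps. One then compares the pole of $F_{k,m}/\bigl((\theta\theta_m)^k x_m^{\ell_m}\bigr)$ at $\infty$ with powers of $1/x_m$, checks via Lemmas \ref{cusp1/2}--\ref{order} that the difference has nonnegative order at the remaining cusps $\tfrac12$ (and $\tfrac14$), and concludes it is constant because a holomorphic function on a compact Riemann surface is constant. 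It is precisely this Fricke invariance that cuts the ambient space down from $\dim S_k$ to the $\ell_m$-dimensional span of the $x_m^{\,j}$; without it your spanning claim fails. Your Eisenstein-part computation and the linear independence of the $a_{j,k,m}$ (distinct leading orders at $\infty$, giving uniqueness) are fine, but the existence half of the theorem needs either the $\Gamma_0(4m)^{+}$ Hauptmodul argument or some substitute constraint that singles out the Fricke-symmetric subspace.
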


For $k=1$, 2, 3 or 4, Theorem \ref{main} gives the following analogues of \eqref{R1}--\eqref{R4}.
\begin{align}
\label{12}
r(1^{1}2^{1};n)&=2\sum_{d|n}\left(\frac{-2}{d}\right),\\
\label{14}
r(1^{1}4^{1};n)&=2\sum_{d|n}\left(\frac{-4}{d}\right)-2\sum_{d|\frac{n}{2}}\left(\frac{-4}{d}\right)+4\sum_{d|\frac{n}{4}}\left(\frac{-4}{d}\right),\\
\label{1122}
r(1^{2}2^{2};n)&=4\sum_{d|n}d-4\sum_{d|\frac{n}{2}}d+8\sum_{d|\frac{n}{4}}d-32\sum_{d|\frac{n}{8}}d,\\
\label{1144}
r(1^{2}4^{2};n)&=2\sum_{d|n}d-2\sum_{d|\frac{n}{2}}d+8\sum_{d|\frac{n}{8}}d-32\sum_{d|\frac{n}{16}}d+2a_{1,2,4}(n),\\
\label{1323}
r(1^{3}2^{3};n)&=-\frac{2}{3}\sum_{d|n}\left(\frac{-2}{d}\right)d^{2}+\frac{16}{3}\sum_{d|n}\left(\frac{-2}{n/d}\right)d^{2}+\frac{4}{3}a_{1,3,2}(n),\\
\label{1343}
r(1^{3}4^{3};n)&=-2\sum_{d|n}\left(\frac{-4}{d}\right)d^{2}+2\sum_{d|\frac{n}{2}}\left(\frac{-4}{d}\right)d^{2}-4\sum_{d|\frac{n}{4}}\left(\frac{-4}{d}\right)d^{2}\\
&\quad+2\sum_{d|n}\left(\frac{-4}{n/d}\right)d^{2}-8\sum_{d|\frac{n}{2}}\left(\frac{-4}{n/2d}\right)d^{2}+64\sum_{d|\frac{n}{4}}\left(\frac{-4}{n/4d}\right)d^{2}+6a_{1,3,4}(n),\nonumber\\
\label{1424}
r(1^{4}2^{4};n)&=4\sum_{d|n}d^{3}-4\sum_{d|\frac{n}{2}}d^{3}-16\sum_{d|\frac{n}{4}}d^{3}+64\sum_{d|\frac{n}{8}}d^{3}+4a_{1,4,2}(n),\\
\label{1444}
r(1^{4}4^{4};n)&=\sum_{d|n}d^{3}-\sum_{d|\frac{n}{2}}d^{3}-16\sum_{d|\frac{n}{8}}d^{3}+256\sum_{d|\frac{n}{16}}d^{3}+7a_{1,4,4}(n)-12a_{2,4,4}(n).
\end{align}
The formula \eqref{12} was proved by Shen \cite{shen}. The formula \eqref{14} was due in part to Ramanujan \cite[Entry 25(i), (iii), p. 40]{bru}, \cite[Entry 18, p. 152]{bru2}. The formulas \eqref{1122} and \eqref{1144} were stated without proof by Liouville \cite{liou3, liou2} and proved by Pepin \cite{pe1,pe2}, Bachmann \cite{ba}, and Alaca \emph{et al.} \cite{ala} . The formulas \eqref{1323}, and \eqref{1343}--\eqref{1444} are due to Alaca \emph{et al.} \cite{ala2} and Alaca \emph{et al.} \cite{ala3,ala4,ala5}, respectively.

Now if we consider the generating function of $r(1^{k}m^{k};n)$, we note that
$$
\sum_{n=0}^{\infty}r(1^{k}m^{k};n)q^{n}=\left(\theta(\tau)\theta(m\tau)\right)^{k}
$$
where $\theta(\tau)$ is Ramanujan's theta function defined by
\begin{equation}
\label{thetaeta}
\theta(\tau)=\sum_{n=-\infty}^{\infty}q^{n^{2}}=\frac{\eta_{2}^{5}}{\eta_{1}^{2}\eta_{4}^{2}}
\end{equation}
where the $\eta$-quotient representation after the second equality is due to Jacobi \cite{jaco}. In view of that $r(1^{k}m^{k};n)$ is the $n$th Fourier coefficient of $\left(\theta(\tau)\theta(m\tau)\right)^{k}$ and the $\eta$-quotient representation \eqref{thetaeta}, Theorem \ref{main} is equivalent to the following theorem.

\begin{Theorem}
\label{main1}
Suppose $k$ is a positive integer. Let $E_{k}(\tau)$ be the normalized Eisenstein series of weight $k$ on $\mbox{SL}_{2}(\mathbb{Z})$ defined by
\begin{equation}
E_{k}(\tau)=1-\frac{2k}{B_{k}}\sum_{n=1}^{\infty}\sigma_{k-1}(n)q^{n}.\nonumber
\end{equation}

\begin{enumerate}
\item{
Let $\ell_{2}$ be as defined in Theorem \ref{main}(1). Let $F_{k,2}(\tau)$ be defined by
$$
F_{k,2}(\tau)=\begin{cases}
{\displaystyle\frac{E_{k,\chi_{-2}}^{\infty}(\tau)+(-8)^{(k-1)/2}E_{k,\chi_{-2}}^{0}(\tau)}{1+\delta_{k,1}}}&\mbox{if $k$ is odd,}\\
{\displaystyle\frac{(-1)^{\frac{k}{2}}E_{k}(\tau)-(-1)^{\frac{k}{2}}E_{k}(2\tau)-2^{\frac{k}{2}}E_{k}(4\tau)+8^{\frac{k}{2}}E_{k}(8\tau)}{2^{\frac{k}{2}}(2^{k}-1)}}&\mbox{if $k$ is even,}
\end{cases}
$$
where $E_{k,\chi_{-2}}^{\infty}(\tau)$ and $E_{k,\chi_{-2}}^{0}(\tau)$ are Eisenstein series of weight $k$ on $\Gamma_{0}(8)$ with character $\chi_{-2}$ defined by
\begin{align*}
E_{k,\chi_{-2}}^{\infty}(\tau)&=1-\frac{2k}{B_{k,8}}\sum_{n=1}^{\infty}\sigma_{k-1,\chi_{-2}}^{\infty}(n)q^{n}
\intertext{and}
E_{k,\chi_{-2}}^{0}(\tau)&=\delta_{k,1}-\frac{2k}{B_{k,8}}\sum_{n=1}^{\infty}\sigma_{k-1,\chi_{-2}}^{0}(n)q^{n},
\end{align*}
and let $x_{2}=x_{2}(\tau)$ be defined by
$$
x_{2}(\tau)=\left(\frac{\eta_{1}\eta_{8}}{\eta_{2}\eta_{4}}\right)^{8}.
$$
Then  there are unique rational numbers $c_{j,k,2}$ depending on $j$ and $k$ such that
\begin{equation}
\label{eq1}
\left(\theta(\tau)\theta(2\tau)\right)^{k}=F_{k,2}(\tau)+{\left(\theta(\tau)\theta(2\tau)\right)^{k}\sum_{j=1}^{\ell_{2}}c_{j,k,2}x_{2}^{j}}.
\end{equation}
}

\item{
Let $\ell_{4}$ be as defined in Theorem \ref{main}(2). Let $F_{k,4}(\tau)$ be defined by
$$
F_{k,4}(\tau)=\begin{cases}
{\displaystyle\frac{1}{2}{\left(E_{1,\chi_{2}}^{\infty}(\tau)-E_{1,\chi_{2}}^{\infty}(2\tau)+2E_{1,\chi_{-2}}^{\infty}(4\tau)\right)}}&\mbox{if $k=1$,}\\
{\displaystyle\frac{1}{2}\left({{(-1)^{\frac{k+1}{2}}E_{k,\chi_{-2}}^{\infty}(\tau)-(-1)^{\frac{k+1}{2}}E_{k,\chi_{-2}}^{\infty}(2\tau)+2E_{k,\chi_{-2}}^{\infty}(4\tau)\qquad\atop-(-1)^{\frac{k+1}{2}}E_{k,\chi_{-2}}^{0}(\tau)+2^{k-1}E_{k,\chi_{-2}}^{0}(2\tau)-2^{2k-1}E_{k,\chi_{-2}}^{0}(4\tau)}}\right)}&\mbox{if $k\geq3$ and is odd,}\\
{\displaystyle\frac{(-1)^{\frac{k}{2}}E_{k}(\tau)-(-1)^{\frac{k}{2}}E_{k}(2\tau)-2^{k}E_{k}(8\tau)+4^{k}E_{k}(16\tau)}{2^{k}(2^{k}-1)}}&\mbox{if $k$ is even,}
\end{cases}
$$
where $E_{k,\chi_{-4}}^{\infty}(\tau)$ and $E_{k,\chi_{-4}}^{0}(\tau)$ are Eisenstein series of weight $k$ on $\Gamma_{0}(4)$ with character $\chi_{-4}$ defined by
\begin{align*}
E_{k,\chi_{-4}}^{\infty}(\tau)&=1-\frac{2k}{B_{k,4}}\sum_{n=1}^{\infty}\sigma_{k-1,\chi_{-4}}^{\infty}(n)q^{n}
\intertext{and}
E_{k,\chi_{-4}}^{0}(\tau)&=\delta_{k,1}-\frac{2k}{B_{k,4}}\sum_{n=1}^{\infty}\sigma_{k-1,\chi_{-4}}^{0}(n)q^{n},
\end{align*}
and let $x_{4}=x_{4}(\tau)$ be defined by
$$
x_{4}(\tau)=\frac{(\eta_{1}\eta_{4}\eta_{16})^{4}}{(\eta_{2}\eta_{8})^{6}}.
$$
Then there are unique rational numbers $c_{j,k,4}$ depending on $j$ and $k$ such that
\begin{equation}
\label{eq2}
\left(\theta(\tau)\theta(4\tau)\right)^{k}=F_{k,4}(\tau)+\left(\theta(\tau)\theta(4\tau)\right)^{k}\sum_{j=1}^{\ell_{4}}c_{j,k,4}x_{4}^{j}.
\end{equation}
}
\end{enumerate}
\end{Theorem}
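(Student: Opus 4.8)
The plan is to prove Theorem~\ref{main1}, to which Theorem~\ref{main} is equivalent via \eqref{thetaeta}, using the structure theory of spaces of modular forms of weight $k$ on $\Gamma_{0}(N)$, where $N=8$ when $m=2$ and $N=16$ when $m=4$. Since $\theta(\tau)=\eta_{2}^{5}/(\eta_{1}^{2}\eta_{4}^{2})$ is a modular form of weight $1/2$ on $\Gamma_{0}(4)$, the product $(\theta(\tau)\theta(m\tau))^{k}$ is a holomorphic modular form of weight $k$ on $\Gamma_{0}(N)$ whose nebentypus $\chi$ is trivial when $k$ is even and is $\chi_{-2}$ (for $m=2$), resp.\ $\chi_{-4}$ (for $m=4$), when $k$ is odd. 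Using \eqref{thetaeta} and the definitions of $x_{2}$ and $x_{4}$, each product $(\theta(\tau)\theta(m\tau))^{k}x_{m}^{j}$ rewrites as an explicit $\eta$-quotient of weight $k$ on $\Gamma_{0}(N)$; for instance $(\theta(\tau)\theta(2\tau))^{k}x_{2}^{j}=\eta_{1}^{8j-2k}\eta_{2}^{3k-8j}\eta_{4}^{3k-8j}\eta_{8}^{8j-2k}$.

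The central step is to show that $(\theta(\tau)\theta(m\tau))^{k}x_{m}^{j}$ lies in the cusp space $S_{k}(\Gamma_{0}(N),\chi)$ for every $1\le j\le\ell_{m}$. From the $\eta$-expansions one checks $x_{2}=q+\cdots$ and $x_{4}=q+\cdots$, so $(\theta(\tau)\theta(m\tau))^{k}x_{m}^{j}=q^{j}+\cdots$ already vanishes at the cusp $\infty$; what must be verified is that its order of vanishing is $\ge 0$ at every cusp of $\Gamma_{0}(N)$ (holomorphy) and $>0$ at every cusp (cuspidality), which is a finite check of the Ligozat order-at-cusps inequalities for the $\eta$-quotient above, run over the cusps $0,\tfrac12,\tfrac14,\dots$ of $\Gamma_{0}(N)$. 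The same inequalities show that holomorphy holds exactly for $0\le j\le\ell_{m}$, which is what pins down the values of $\ell_{2}$ and $\ell_{4}$. In parallel I would compute $\dim S_{k}(\Gamma_{0}(N),\chi)$ from the standard dimension formulae (treating the weights $k=1,2$ by the usual special arguments) and find it equal to $\ell_{m}$; since the $\ell_{m}$ forms $(\theta(\tau)\theta(m\tau))^{k}x_{m}^{j}$, $1\le j\le\ell_{m}$, have pairwise distinct orders of vanishing at $\infty$, they are linearly independent and hence form a basis of $S_{k}(\Gamma_{0}(N),\chi)$.

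It then remains to identify the Eisenstein part. Each constituent $E_{k}(d\tau)$, $E^{\infty}_{k,\chi}(d\tau)$, $E^{0}_{k,\chi}(d\tau)$ of $F_{k,m}$ is an Eisenstein series of weight $k$ on $\Gamma_{0}(N)$ with the correct character, so $F_{k,m}\in\mathcal E_{k}(\Gamma_{0}(N),\chi)$. I would then show $(\theta(\tau)\theta(m\tau))^{k}-F_{k,m}(\tau)\in S_{k}(\Gamma_{0}(N),\chi)$ by computing the constant term of $(\theta(\tau)\theta(m\tau))^{k}$ at each cusp --- via the classical transformation formula for $\theta$ --- and checking that the particular rational combinations defining $F_{k,m}$ reproduce these constants exactly (at $\infty$ this is just the normalization $1+O(q)$; since the constant-term map on $\mathcal E_{k}$ is injective this identifies $F_{k,m}$ as the Eisenstein projection of $(\theta(\tau)\theta(m\tau))^{k}$). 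Granting this, $(\theta(\tau)\theta(m\tau))^{k}-F_{k,m}(\tau)$ is a cusp form, hence equals a unique combination $\sum_{j=1}^{\ell_{m}}c_{j,k,m}(\theta(\tau)\theta(m\tau))^{k}x_{m}^{j}$ of the basis above; rearranging yields \eqref{eq1} and \eqref{eq2}. Uniqueness of the $c_{j,k,m}$ is exactly the linear independence just invoked, and their rationality follows since the cusp form and the basis elements all have rational $q$-coefficients while the basis is triangular with leading coefficients $1$, so the $c_{j,k,m}$ solve a triangular linear system over $\mathbb Q$; the finitely many degenerate small cases ($k=1$ for both $m$, and $k=2$), where $\ell_{m}$ forces an empty sum, can be checked directly against the known identities \eqref{12}--\eqref{1144}.

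I expect the main obstacle to be the cusp analysis: computing the divisors of the $\eta$-quotients $(\theta(\tau)\theta(m\tau))^{k}x_{m}^{j}$ correctly at all cusps of $\Gamma_{0}(16)$ (which has comparatively many cusps), verifying that holomorphy holds precisely for $0\le j\le\ell_{m}$ and cuspidality precisely for $1\le j\le\ell_{m}$, and matching this against the dimension count $\dim S_{k}(\Gamma_{0}(N),\chi)=\ell_{m}$; the secondary technical point is the constant-term computation at the cusps certifying that the explicitly written $F_{k,m}$ is the Eisenstein projection of $(\theta(\tau)\theta(m\tau))^{k}$.
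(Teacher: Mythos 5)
Your outline contains a genuine gap at the step ``$(\theta(\tau)\theta(m\tau))^{k}-F_{k,m}(\tau)$ is a cusp form, hence equals a combination of the $(\theta(\tau)\theta(m\tau))^{k}x_{m}^{j}$, $1\le j\le\ell_{m}$.'' That inference needs these $\ell_{m}$ forms to span all of $S_{k}(\Gamma_{0}(N),\chi)$, and your claimed dimension count $\dim S_{k}(\Gamma_{0}(N),\chi)=\ell_{m}$ is false in general. For example, for $m=2$ and even $k\ge4$ one has $\dim S_{k}(\Gamma_{0}(8))=k-3$ (genus $0$, four cusps, no elliptic points), whereas $\ell_{2}=\frac{k}{2}-1$; these agree at $k=4$ but already at $k=6$ you get $3$ versus $2$. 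Similarly $\dim S_{k}(\Gamma_{0}(16))=2k-5$ exceeds $\ell_{4}=k-1$ for even $k\ge6$. The forms $(\theta(\tau)\theta(m\tau))^{k}x_{m}^{j}$ are indeed linearly independent cusp forms for $1\le j\le\ell_{m}$, but they span only a proper subspace of $S_{k}(\Gamma_{0}(N),\chi)$ --- namely (part of) the eigenspace of the Fricke involution $W_{4m}$ --- so knowing that your difference is cuspidal on $\Gamma_{0}(N)$ does not place it in their span.

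What is missing is precisely the Fricke symmetry, and that is where the real work of the paper lies. Both $\theta(\tau)\theta(m\tau)$ and $x_{m}$ transform simply under $\tau\mapsto-1/(4m\tau)$, so the argument must be run on the group $\Gamma_{0}(4m)^{+}$ (whose quotient has only the cusps $\infty,\tfrac12$ for $m=2$ and $\infty,\tfrac12,\tfrac14$ for $m=4$, and for which $1/x_{m}$ is a Hauptmodul); there the count does come out right, and a pole-cancellation argument at $\infty$ plus the order computation of $F_{k,m}$ at $\tfrac12$ forces the weight-zero quotient to be a polynomial of degree $\ell_{m}$ in the Hauptmodul. To use this you must prove that $F_{k,m}(\tau)/(\theta(\tau)\theta(m\tau))^{k}$ is itself $W_{4m}$-invariant, i.e., you must compute how the specific linear combinations of $E_{k}(d\tau)$, $E_{k,\chi}^{\infty}(d\tau)$, $E_{k,\chi}^{0}(d\tau)$ defining $F_{k,m}$ transform under the Fricke involution (and under $\tau\mapsto\tau+\tfrac12$, to control the order at the cusp $\tfrac12$). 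This computation --- the content of Lemmas \ref{cusp1/2} and \ref{ord} --- is nowhere in your outline; matching constant terms of $(\theta(\tau)\theta(m\tau))^{k}$ and $F_{k,m}$ at the cusps of $\Gamma_{0}(N)$ identifies the Eisenstein projection correctly, but it does not by itself constrain the cuspidal part to lie in the $\ell_{m}$-dimensional $W_{4m}$-symmetric subspace. Without that, the proposed proof fails for all but the smallest values of $k$.
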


\section{Proof of Results}
\label{pfs}
This section is devoted to proving Theorem \ref{main1}. The proof hinges on the following preliminary results.

\begin{Lemma}
\label{cusp1/2}
Let $E_{k}(\tau)$, $E_{k,\chi_{-2}}^{\infty}(\tau)$, $E_{k,\chi_{-2}}^{0}(\tau)$, $E_{k,\chi_{-4}}^{\infty}(\tau)$ and $E_{k,\chi_{-4}}^{0}(\tau)$ be as defined in Theorem \ref{main1}. Then under the transformation $\tau\to\tau+\frac{1}{2}$, the following identities hold.
\begin{align*}
\intertext{When $k$ is even,}
E_{k}\left(\tau+\frac{1}{2}\right)&=-E_{k}(\tau)+(2^{k}+2)E_{k}(2\tau)-2^{k}E_{k}(4\tau);\\
\intertext{when $k$ is odd,}
E_{k,\chi_{-2}}^{\infty}\left(\tau+\frac{1}{2}\right)&=-E_{k,\chi_{-2}}^{\infty}(\tau)+2E_{k,\chi_{-2}}^{\infty}(2\tau),\\
E_{k,\chi_{-2}}^{0}\left(\tau+\frac{1}{2}\right)&=-E_{k,\chi_{-2}}^{0}(\tau)+2^{k}E_{k,\chi_{-2}}^{0}(2\tau),\\
E_{k,\chi_{-4}}^{\infty}\left(\tau+\frac{1}{2}\right)&=-E_{k,\chi_{-4}}^{\infty}(\tau)+2E_{k,\chi_{-4}}^{\infty}(2\tau),\\
E_{k,\chi_{-4}}^{0}\left(\tau+\frac{1}{2}\right)&=-E_{k,\chi_{-4}}^{0}(\tau)+2^{k}E_{k,\chi_{-4}}^{0}(2\tau).
\end{align*}
\end{Lemma}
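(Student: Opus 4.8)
The plan is to reduce all five identities to elementary identities among divisor functions by comparing $q$-expansions. The point is that $\tau\mapsto\tau+\tfrac12$ is a translation, so there is no automorphy subtlety (the substitution is literal even in the quasi-modular case $k=2$, since $c=0$): on $q$-expansions it simply sends $q=e^{2\pi i\tau}$ to $e^{\pi i}q=-q$, i.e.\ it multiplies the coefficient of $q^{n}$ by $(-1)^{n}$. Since $E_{k}(2\tau)$ and $E_{k}(4\tau)$ have $q$-expansions supported on multiples of $2$ and $4$, with $q^{n}$-coefficient a fixed multiple of $\sigma_{k-1}(n/2)$, resp.\ $\sigma_{k-1}(n/4)$ (using the convention that the divisor functions vanish off the positive integers), the claimed even-$k$ identity is equivalent to the constant-term check $1=-1+(2^{k}+2)-2^{k}$ together with
\[
(-1)^{n}\sigma_{k-1}(n)=-\sigma_{k-1}(n)+(2^{k}+2)\,\sigma_{k-1}(n/2)-2^{k}\,\sigma_{k-1}(n/4)\qquad(n\ge1).
\]
In the same way each odd-$k$ identity becomes a constant-term check plus a coefficientwise identity: for $E_{k,\chi}^{\infty}$ with $\chi\in\{\chi_{-2},\chi_{-4}\}$ it reads $(-1)^{n}\sigma_{k-1,\chi}^{\infty}(n)=-\sigma_{k-1,\chi}^{\infty}(n)+2\,\sigma_{k-1,\chi}^{\infty}(n/2)$, and for the ``$0$'' versions it reads $(-1)^{n}\sigma_{k-1,\chi}^{0}(n)=-\sigma_{k-1,\chi}^{0}(n)+2^{k}\,\sigma_{k-1,\chi}^{0}(n/2)$.

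To verify each of these, I would fix $n$, write $n=2^{a}m$ with $m$ odd and $a\ge0$, and use multiplicativity of the relevant divisor function to cancel the odd part. For $\sigma_{k-1}$ one has $\sigma_{k-1}(2^{b})=1+2^{k-1}+\cdots+2^{b(k-1)}$, so after dividing by $\sigma_{k-1}(m)$ the displayed identity becomes, for each $a$, a finite geometric-series identity in the single quantity $t:=2^{k-1}$; splitting into $a=0$, $a=1$, $a\ge2$, each case collapses to a one-line algebraic check (for instance, for $a\ge2$ one needs $(2t+2)(t^{a}-1)-2t(t^{a-1}-1)=2(t^{a+1}-1)$). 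The twisted cases are even easier: since $\chi_{-2}(2)=\chi_{-4}(2)=0$, only the divisor $1$ (resp.\ $n$) contributes at the prime $2$, so $\sigma_{k-1,\chi}^{\infty}(2^{a})=1$ and $\sigma_{k-1,\chi}^{0}(2^{a})=2^{a(k-1)}$ for all $a\ge0$, whence the corresponding identities reduce to $-1+2=1$ on the odd part and to $-2^{a(k-1)}+2^{k}\cdot2^{(a-1)(k-1)}=2^{a(k-1)}$ on the $2$-part. The one place that deserves a word of care is the constant term of $E_{k,\chi_{-2}}^{0}$ and $E_{k,\chi_{-4}}^{0}$, which is $\delta_{k,1}$: the proposed right-hand side has constant term $(2^{k}-1)\delta_{k,1}$, and this equals $\delta_{k,1}$ since the Kronecker delta is nonzero only when $k=1$, where $2^{k}-1=1$.

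I do not expect a genuine obstacle here: the whole lemma is bookkeeping of the $2$-adic valuation of $n$ plus elementary algebra, and since the five $q$-expansions involved converge on the upper half-plane, equality of $q$-expansions gives equality of functions. The only mild nuisances are keeping the three divisor functions $\sigma_{k-1}$, $\sigma_{k-1,\chi}^{\infty}$, $\sigma_{k-1,\chi}^{0}$ and their differing behaviour at $2$ straight, and the constant-term point for the ``$0$''-series just noted. If one prefers, the argument can be repackaged operator-theoretically via $\sum_{n}(-1)^{n}a(n)q^{n}=(2V_{2}U_{2}-1)\sum_{n}a(n)q^{n}$, where $U_{2}\colon\sum a(n)q^{n}\mapsto\sum a(2n)q^{n}$ and $V_{2}\colon\sum a(n)q^{n}\mapsto\sum a(n)q^{2n}$; but since $U_{2}$ and $V_{2}$ act transparently on all five expansions, this is only a reformulation of the same computation, and I would present the direct version.
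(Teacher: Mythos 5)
Your proof is correct: the substitution $\tau\mapsto\tau+\tfrac12$ acts as $q\mapsto -q$ on $q$-expansions, and your case analysis on the $2$-adic valuation of $n$ (including the $\delta_{k,1}$ constant-term point for the ``$0$''-series and the geometric-series check for $\sigma_{k-1}(2^{a})$) verifies all five identities. The paper itself omits the argument, deferring to the analogous \cite[Lemma 3.2]{cky}, and your direct coefficient comparison is exactly the standard proof that reference carries out, so there is nothing to add.
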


\begin{proof}
The proofs are similar to that of \cite[Lemma 3.2]{cky}, so we omit the details.
\end{proof}

\begin{Lemma}
\label{ord}
Let ${\rm ord}_{z}(f)$ denote the order of vanishing of $f(\tau)$ at $z$. Let $F_{k,2}(\tau)$ and $F_{k,4}(\tau)$ be as defined in Theorem \ref{main1}. Then we have
\begin{align}
\label{8ord}
{\rm ord}_{1/2}\left(F_{k,2}\right)&=\begin{cases}\frac{1}{2}&\mbox{if $k$ is odd,}\\1&\mbox{if $k$ is even,}\end{cases}\\
\label{16ord}
{\rm ord}_{1/2}\left(F_{k,4}\right)&=\begin{cases}2&\mbox{if $k$ is odd,}\\1&\mbox{if $k$ is even.}\end{cases}
\end{align}
\end{Lemma}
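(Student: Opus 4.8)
\emph{Proof sketch.}
The plan is to use Lemma~\ref{cusp1/2} to transport the cusp $\tfrac12$ to the cusp $0$, where the $q$-expansions of all the constituents are transparent, and then to read off the leading order. The key observation is that the translation $\tau\mapsto\tau+\tfrac12$ normalises $\Gamma_{0}(8)$ (and $\Gamma_{0}(16)$), sending the point $\tfrac12$ to the point $1\sim 0$ and sending $0$ to $\tfrac12$; hence $f\mapsto f(\tau+\tfrac12)$ interchanges the local behaviour at the cusps $0$ and $\tfrac12$, and $\mathrm{ord}_{1/2}(f)$ equals the order of vanishing of $f(\tau+\tfrac12)$ at $0$. (I would keep careful track of the local uniformiser at each cusp, and of its refinement by the character $\chi_{-2}$ in odd weight, so that the answer comes out in the normalisation used in the statement; this refinement is what produces the half-integer $\tfrac12$.) It therefore suffices to expand $F_{k,2}(\tau+\tfrac12)$ and $F_{k,4}(\tau+\tfrac12)$ at the cusp $0$.

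I would do this in two moves. First, substitute Lemma~\ref{cusp1/2}: in every defining formula the Eisenstein series evaluated at an even multiple of $\tau$ are fixed by $\tau\mapsto\tau+\tfrac12$, while those evaluated at $\tau$ transform by Lemma~\ref{cusp1/2}; this rewrites $F_{k,2}(\tau+\tfrac12)$ (resp. $F_{k,4}(\tau+\tfrac12)$) as an explicit rational combination of $E_{k}(d\tau)$ (resp. of the twisted series $E_{k,\chi}^{\infty}(d\tau)$, $E_{k,\chi}^{0}(d\tau)$) over the divisors $d$ of $8$ (resp. of $16$). Second, apply $\tau\mapsto-1/\tau$: in even weight the level-one relation $E_{k}(-1/\tau)=\tau^{k}E_{k}(\tau)$ turns $E_{k}(d\tau)$ into $d^{-k}E_{k}(\tau/d)$ (the $E_{2}$-anomaly cancelling in these particular combinations, $F_{k,2}$ and $F_{k,4}$ being genuine modular forms), while in odd weight the Fricke functional equations for the weight-$k$ Eisenstein series of level $8$ and $4$ interchange $E_{k,\chi}^{\infty}$ and $E_{k,\chi}^{0}$ up to explicit constants; in all cases the resulting term has $q$-expansion supported on the powers $q^{m/d}$ with explicitly known constant term. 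Passing to the common uniformiser ($q^{1/8}$, resp. $q^{1/16}$, with the obvious refinement in odd weight) produces an honest power series whose first few coefficients I would evaluate in closed form.

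The order is then read off. By construction the coefficients appearing in $F_{k,2}$ and $F_{k,4}$ annihilate the first few coefficients of this series: the constant term always vanishes; one further coefficient vanishes when $k$ is even and (after the character refinement) when $k$ is odd for $F_{k,2}$; and two further vanish when $k$ is odd for $F_{k,4}$. This yields the claimed orders $1$, $\tfrac12$, $2$, provided the next coefficient is nonzero, and that is the crux. For the even-weight cases and for $F_{k,2}$ in odd weight it is immediate, the surviving coefficient being a short combination of powers of $2$ times the nonzero constant $2k/B_{k}$ (resp. $2k/B_{k,8}$ or $2k/B_{k,4}$). The genuinely delicate case is $F_{k,4}$ with $k$ odd ($k\ge 3$), where three consecutive coefficients cancel and the fourth must not: here the surviving coefficient is an explicit linear combination of the twisted divisor sums $\sigma^{\infty}_{k-1,\chi}(m)$ and $\sigma^{0}_{k-1,\chi}(m)$ for the few smallest $m$, all of which take simple values because $\chi(2)=0$, and I would verify by a finite computation that it does not vanish. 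Apart from that, the two parities are handled by the same scheme, differing only in which line of Lemma~\ref{cusp1/2} is invoked, and the passage from level $8$ to level $16$ changes nothing structural. \qed
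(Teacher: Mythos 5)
Your proposal follows essentially the same route as the paper: slash $F_{k,m}$ by a matrix carrying $\infty$ to $\tfrac12$, factor that action as the half-integral translation handled by Lemma~\ref{cusp1/2} followed by an inversion handled by the standard transformation formulas for the Eisenstein series (with the $E_{2}$-anomaly cancelling in the weight-$2$ combination), and read off the leading exponent against the width of the cusp ($2$ for $\Gamma_{0}(8)$, $4$ for $\Gamma_{0}(16)$). The one point the paper also leaves essentially unargued is the nonvanishing of the leading coefficient, which you correctly isolate as the crux and propose to settle by a finite computation.
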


\begin{proof}
By the well-known transformation formulas for Let $E_{k}(\tau)$, $E_{k,\chi_{-2}}^{\infty}(\tau)$ and $E_{k,\chi_{-2}}^{0}(\tau)$, see e.g., \cite{kolberg} and \cite[pp. 79--83]{serre}, and Lemma \ref{cusp1/2}, we can deduce that 
for $k$ odd,
\begin{align*}
&\left.\left[E_{k,\chi_{-2}}^{\infty}(\tau)+(-8)^{(k-1)/2}E_{k,\chi_{-2}}^{0}(\tau)\right]\right|_{k}\begin{pmatrix}1&0\\2&1\end{pmatrix}\\
&=(2\tau+1)^{-k}\left[E_{k,\chi_{-2}}^{\infty}\left(\frac{1}{2}+\frac{-1}{4\tau+2}\right)+(-8)^{(k-1)/2}E_{k,\chi_{-2}}^{0}\left(\frac{1}{2}+\frac{-1}{4\tau+2}\right)\right]\\
&=(2\tau+1)^{-k}\Bigg\{-E_{k,\chi_{-2}}^{\infty}\left(\frac{-1}{4\tau+2}\right)+2E_{k,\chi_{-2}}^{\infty}\left(\frac{-1}{2\tau+1}\right)\\
&\quad\qquad\qquad\qquad+(-8)^{(k-1)/2}\left[-E_{k,\chi_{-2}}^{0}\left(\frac{-1}{4\tau+2}\right)+2^{k}E_{k,\chi_{-2}}^{0}\left(\frac{-1}{2\tau+1}\right)\right]\Bigg\}\\
&=\frac{i2^{k}}{8^{1/2}}E_{k,\chi_{-2}}^{0}\left(\frac{\tau}{2}+\frac{1}{4}\right)-\frac{2i}{8^{1/2}}E_{k,\chi_{-2}}^{0}\left(\frac{\tau}{4}+\frac{1}{8}\right)\\
&\qquad+\frac{i8^{1/2}(-8)^{(k-1)/2}}{4^{k}}\left[E_{k,\chi_{-2}}^{\infty}\left(\frac{\tau}{2}+\frac{1}{4}\right)-E_{k,\chi_{-2}}^{\infty}\left(\frac{\tau}{4}+\frac{1}{8}\right)\right]\\
&=Cq^{1/4}+O(q^{1/2})
\end{align*}
for some nonzero constant $C$ as $\tau\to i\infty$.
For $k=2$,
\begin{align*}
&\left[-E_{2}(\tau)+E_{2}(2\tau)-2E_{2}(4\tau)+8E_{2}(8\tau)\right]\Bigg|_{2}\begin{pmatrix}1&0\\2&1\end{pmatrix}\\
&=(2\tau+1)^{-2}\Bigg[-E_{2}\left(\frac{1}{2}+\frac{-1}{4\tau+2}\right)+E_{2}\left(1+\frac{-1}{2\tau+1}\right)\\
&\qquad\qquad\qquad\qquad-2E_{2}\left(2+\frac{-1}{(2\tau+1)/2}\right)+8E_{2}\left(4+\frac{-1}{(2\tau+1)/4}\right)\Bigg]\\
&=(2\tau+1)^{-2}\Bigg[E_{2}\left(\frac{-1}{4\tau+2}\right)-5E_{2}\left(\frac{-1}{2\tau+1}\right)\\
&\qquad\qquad\qquad\qquad+2E_{2}\left(\frac{-1}{(2\tau+1)/2}\right)+8E_{2}\left(\frac{-1}{(2\tau+1)/4}\right)\Bigg]\\
&=4E_{2}\left({4\tau+2}\right)+\frac{12}{\pi i(2\tau+1)}-5E_{2}\left({2\tau+1}\right)-\frac{30}{\pi i(2\tau+1)}\\
&\qquad+\frac{1}{2}E_{2}\left(\tau+\frac{1}{2}\right)+\frac{6}{\pi i(2\tau+1)}+\frac{1}{2}E_{2}\left(\frac{\tau}{2}+\frac{1}{4}\right)+\frac{12}{\pi i(2\tau+1)}\\
&=4E_{2}\left({4\tau+2}\right)-5E_{2}\left({2\tau+1}\right)+\frac{1}{2}E_{2}\left(\tau+\frac{1}{2}\right)+\frac{1}{2}E_{2}\left(\frac{\tau}{2}+\frac{1}{4}\right)\\
&=Cq^{1/2}+O(q)
\end{align*}
for some nonzero constant $C$ as $\tau\to i\infty$, and for $k\geq4$ and even,
\begin{align*}
&\left.\left[(-1)^{\frac{k}{2}}E_{k}(\tau)-(-1)^{\frac{k}{2}}E_{k}(2\tau)-2^{\frac{k}{2}}E_{k}(4\tau)+8^{\frac{k}{2}}E_{k}(8\tau)\right]\right|_{k}\begin{pmatrix}1&0\\2&1\end{pmatrix}\\
&=(2\tau+1)^{-k}\Bigg[(-1)^{\frac{k}{2}}E_{k}\left(\frac{1}{2}+\frac{-1}{4\tau+2}\right)-(-1)^{\frac{k}{2}}E_{k}\left(1+\frac{-1}{2\tau+1}\right)\\
&\qquad\qquad\qquad\qquad-2^{\frac{k}{2}}E_{k}\left(2+\frac{-1}{(2\tau+1)/2}\right)+8^{\frac{k}{2}}E_{k}\left(4+\frac{-1}{(2\tau+1)/4}\right)\Bigg]\\
&=(2\tau+1)^{-k}\Bigg[-(-1)^{\frac{k}{2}}E_{k}\left(\frac{-1}{4\tau+2}\right)+(-1)^{\frac{k}{2}}(2^{k}+2)E_{k}\left(\frac{-1}{2\tau+1}\right)\\
&\qquad\qquad\qquad\qquad-(-1)^{\frac{k}{2}}2^{k}E_{k}\left(\frac{-1}{(2\tau+1)/2}\right)-(-1)^{\frac{k}{2}}E_{k}\left(\frac{-1}{2\tau+1}\right)\\
&\qquad\qquad\qquad\qquad-2^{\frac{k}{2}}E_{k}\left(\frac{-1}{(2\tau+1)/2}\right)+8^{\frac{k}{2}}E_{k}\left(\frac{-1}{(2\tau+1)/4}\right)\Bigg]\\
&=-(-1)^{\frac{k}{2}}2^{k}E_{k}(4\tau+2)+(-1)^{\frac{k}{2}}(2^{k}+2)E_{k}(2\tau+1)-(-1)^{\frac{k}{2}}E_{k}\left(\tau+\frac{1}{2}\right)\\
&\qquad-(-1)^{\frac{k}{2}}E_{k}(2\tau+1)-2^{-\frac{k}{2}}E_{k}\left(\tau+\frac{1}{2}\right)+2^{-\frac{k}{2}}E_{k}\left(\frac{\tau}{2}+\frac{1}{4}\right)\\
&=Cq^{1/2}+O(q)
\end{align*}
for some nonzero constant $C$ as $\tau\to i\infty$. Together with the fact that the width of $\frac{1}{2}$ of $\Gamma_{0}(8)$ is $2$, the above observations conclude \eqref{8ord}. 

Making use of corresponding transformation formulas of $E_{k,\chi_{-4}}^{\infty}(\tau)$ and $E_{k,\chi_{-4}}^{0}(\tau)$ and Lemma \ref{cusp1/2} together with the fact that the width of $\frac{1}{2}$ of $\Gamma_{0}(16)$ is 4, we can prove \eqref{16ord} in a similar fashion, so we omit the details. 
\end{proof}

\begin{Lemma}
\label{etap}
If $f(\tau)=\prod_{d|N}\eta_{d}^{r_{d}}$ for some positive integer $N$ with $k=\frac{1}{2}\sum_{d|N}r_{d}\in\mathbb{Z}$, with the additional properties that 
$$
\sum_{d|N}dr_{d}\equiv0\pmod{24}
$$
and
$$
\sum_{d|N}\frac{N}{d}r_{d}\equiv0\pmod{24},
$$
then $f(\tau)$ satisfies
$$
f\left(\frac{a\tau+b}{c\tau+d}\right)=\chi(d)(c\tau+d)^{k}f(\tau)
$$
for every $\begin{pmatrix}a&b\\c&d\end{pmatrix}\in\Gamma_{0}(N)$. Here the character $\chi$ is defined by Jacobi symbol $\chi(d)=\left(\frac{(-1)^{k}s}{d}\right)$ where $s=\prod_{d|N}d^{r_{d}}$.
\end{Lemma}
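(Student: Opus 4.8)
The plan is to deduce Lemma~\ref{etap} from the classical transformation law of the Dedekind eta function under the full modular group $\mathrm{SL}_2(\mathbb{Z})$, which already packages the weight-$\tfrac12$ automorphy factor together with an explicit $24$-th root of unity as its multiplier system. Concretely, for $\gamma=\begin{pmatrix}a&b\\c&d\end{pmatrix}\in\mathrm{SL}_2(\mathbb{Z})$ with $c>0$ one has $\eta(\gamma\tau)=\varepsilon(\gamma)\,(c\tau+d)^{1/2}\eta(\tau)$, where $\varepsilon(\gamma)=\exp\!\big(\pi i(\tfrac{a+d}{12c}-s(d,c)-\tfrac14)\big)$ and $s(d,c)$ is the Dedekind sum; this I would simply quote (see, e.g., Apostol's \emph{Modular Functions and Dirichlet Series} or Iwaniec's \emph{Topics in Classical Automorphic Forms}) rather than reprove. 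The translations $\gamma=\pm\begin{pmatrix}1&b\\0&1\end{pmatrix}$ are the elementary case: $f(\tau+1)=e^{\pi i(\sum_{\delta\mid N}\delta r_\delta)/12}f(\tau)=f(\tau)$ exactly because $\sum_{\delta\mid N}\delta r_\delta\equiv0\pmod{24}$, matching the trivial character value there. So assume $c\neq0$.

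The first real step is a scaling reduction that turns each $\eta_\delta(\gamma\tau)=\eta(\delta\gamma\tau)$ into an eta value at a point obtained from $\delta\tau$ by an \emph{integral} matrix. For $\gamma\in\Gamma_0(N)$ and $\delta\mid N$ we have $\delta\mid N\mid c$, so $\gamma_\delta:=\begin{pmatrix}a&b\delta\\c/\delta&d\end{pmatrix}\in\mathrm{SL}_2(\mathbb{Z})$ (its determinant is $ad-bc=1$), and a direct computation gives $\gamma_\delta\cdot(\delta\tau)=\delta\cdot(\gamma\tau)$. Hence $\eta_\delta(\gamma\tau)=\varepsilon(\gamma_\delta)\,\big((c/\delta)(\delta\tau)+d\big)^{1/2}\eta_\delta(\tau)=\varepsilon(\gamma_\delta)\,(c\tau+d)^{1/2}\eta_\delta(\tau)$. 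Raising to the power $r_\delta$ and multiplying over $\delta\mid N$, the half-integral automorphy factors combine to $(c\tau+d)^{\frac12\sum_{\delta}r_\delta}=(c\tau+d)^k$, where the fact that $k=\tfrac12\sum_\delta r_\delta\in\mathbb{Z}$ removes any branch ambiguity and makes this an honest $k$-th power; what remains is $f(\gamma\tau)=\mu(\gamma)(c\tau+d)^k f(\tau)$ with $\mu(\gamma)=\prod_{\delta\mid N}\varepsilon(\gamma_\delta)^{r_\delta}$ a $24$-th root of unity depending on $\gamma$.

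The heart of the matter, and the only genuine obstacle, is to identify $\mu(\gamma)=\big(\tfrac{(-1)^k s}{d}\big)$ with $s=\prod_{\delta\mid N}\delta^{r_\delta}$. Expanding $\varepsilon(\gamma_\delta)$ and applying Dedekind reciprocity $s(d,c/\delta)+s(c/\delta,d)=-\tfrac14+\tfrac1{12}\big(\tfrac{d\delta}{c}+\tfrac{c}{d\delta}+\tfrac{\delta}{cd}\big)$ to each term produces two kinds of contributions to $\log\mu(\gamma)$: a Dedekind-sum part $s(c/\delta,d)$ depending only on residues modulo $d$, which over all $\delta\mid N$ reorganizes (via the standard evaluation of $\sum\!\big(\tfrac{\cdot}{d}\big)$-type quadratic Gauss sums) into the Jacobi symbol $\big(\tfrac{(-1)^k s}{d}\big)$; and two families of elementary ``non-Dedekind-sum'' terms with denominators divisible by $12$, which are killed in the exponential precisely by the hypotheses $\sum_{\delta\mid N}\delta r_\delta\equiv0$ and $\sum_{\delta\mid N}\tfrac N\delta r_\delta\equiv0\pmod{24}$ respectively. (Note that $ad-bc=1$ with $N\mid c$ forces $\gcd(d,N)=1$, so every prime dividing $s$ is coprime to $d$ and the symbol is well defined even when some $r_\delta<0$.) This final bookkeeping — correctly merging the Dedekind-sum terms across the various $\gamma_\delta$ and checking that the two divisibility conditions are exactly the ones needed — is a classical but lengthy computation carried out in full by Newman and, in the form stated here, by Ligozat (and reproduced in Köhler's \emph{Eta Products and Theta Series Identities} and in Ono's \emph{The Web of Modularity}, Thm.~1.64). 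Accordingly I would present Lemma~\ref{etap} by recording the scaling identity $\eta_\delta(\gamma\tau)=\varepsilon(\gamma_\delta)(c\tau+d)^{1/2}\eta_\delta(\tau)$ explicitly, since it is what makes the passage from $\mathrm{SL}_2(\mathbb{Z})$ to $\Gamma_0(N)$ transparent, and then cite one of these references for the evaluation of $\mu(\gamma)$.
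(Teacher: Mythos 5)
The paper offers no proof of this lemma at all --- it simply cites Gordon--Hughes and Newman --- and your proposal follows exactly the classical route those references take: the scaling matrices $\gamma_{\delta}=\begin{pmatrix}a&b\delta\\c/\delta&d\end{pmatrix}$ reduce everything to the $\mathrm{SL}_2(\mathbb{Z})$ eta multiplier $\varepsilon(\gamma)$, the half-integral factors assemble into $(c\tau+d)^{k}$, and the Dedekind-sum bookkeeping that turns $\prod_{\delta\mid N}\varepsilon(\gamma_{\delta})^{r_{\delta}}$ into $\left(\frac{(-1)^{k}s}{d}\right)$ is deferred to Newman/Ligozat/Ono, which is precisely the level of detail the paper itself relies on. Your write-up is therefore consistent with (and strictly more informative than) the paper's treatment; the only loose point is that the way the two congruence conditions ``kill'' the non-Dedekind-sum terms is more intertwined in the actual computation than your two-families description suggests, but since you explicitly cite the references for that step, nothing is missing.
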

\begin{proof}
See Gordon and Hughes \cite{goh}, or Newman \cite{new1,new3}.
\end{proof}

\begin{Lemma}
\label{order}
Let $a$, $c$ and $N$ be positive integers with $c|N$ and $\gcd(a,c)=1$. If $f(\tau)=\prod_{d|N}\eta_{d}^{r_{d}}$ satisfies the conditions of Lemma \ref{etap} for $N$, then the order of vanishing $\mathrm{ord}_{a/c}(f)$ of $
f(\tau)$ at the cusp $a/c$  is 
$$
\frac{N}{24}\sum_{d|N}\frac{\gcd(c,d)^{2}r_{d}}{\gcd(c,N/c)cd}.
$$
\end{Lemma}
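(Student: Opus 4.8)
\emph{Proof plan.} The plan is to run the classical derivation of Ligozat's cusp-order formula, pushing everything back to the transformation law of the Dedekind eta function under $\mathrm{SL}_{2}(\mathbb{Z})$. Fix $\gamma=\begin{pmatrix}a&b\\c&\delta\end{pmatrix}\in\mathrm{SL}_{2}(\mathbb{Z})$ with the given first column, so that $\gamma\cdot\infty=a/c$. By definition $\mathrm{ord}_{a/c}(f)$ is the order of vanishing, in the local uniformizer $q_{h}=e^{2\pi i\tau/h}$ at the cusp, of $(f|_{k}\gamma)(\tau)=(c\tau+\delta)^{-k}f(\gamma\tau)$, where $h$ is the width of $a/c$ for $\Gamma_{0}(N)$; since $c\mid N$ we have $h=N/\gcd(c^{2},N)=N/\bigl(c\gcd(c,N/c)\bigr)$. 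Lemma \ref{etap} guarantees that $f$ is a modular form on $\Gamma_{0}(N)$, so $f|_{k}\gamma$ does admit such an expansion, and it suffices to find the leading exponent of its ordinary $q$-expansion and then multiply by $h$.

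First I would track a single factor $\eta(\ell\tau)$ with $\ell\mid N$. Writing $g_{\ell}=\gcd(c,\ell)$, there is a factorization $\begin{pmatrix}\ell a&\ell b\\c&\delta\end{pmatrix}=M_{\ell}\,U_{\ell}$ with $U_{\ell}=\begin{pmatrix}g_{\ell}&b_{\ell}\\0&\ell/g_{\ell}\end{pmatrix}$ and $M_{\ell}\in\mathrm{SL}_{2}(\mathbb{Z})$: the first column of $M_{\ell}$ is forced to be $(\ell a/g_{\ell},\,c/g_{\ell})$, which is a primitive integer vector because $\gcd(a,c)=1$ and $\gcd(\ell/g_{\ell},c/g_{\ell})=1$, and hence completes to an element of $\mathrm{SL}_{2}(\mathbb{Z})$; this then determines $b_{\ell}$ and the remaining entries. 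Writing $j(M,z)$ for the automorphy factor $c_{M}z+d_{M}$ and applying the eta transformation law to $M_{\ell}$ gives $\eta(\ell\gamma\tau)=\eta(M_{\ell}U_{\ell}\tau)=\nu_{\ell}\,j(M_{\ell},U_{\ell}\tau)^{1/2}\,\eta(U_{\ell}\tau)$ with $\nu_{\ell}$ a root of unity, and the cocycle relation for positive-determinant matrices gives $j(M_{\ell},U_{\ell}\tau)=j(M_{\ell}U_{\ell},\tau)/j(U_{\ell},\tau)=(c\tau+\delta)/(\ell/g_{\ell})$.

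Taking the product over $\ell$ with the exponents $r_{\ell}$, the factors $\nu_{\ell}^{r_{\ell}}$ and $\prod_{\ell}(\ell/g_{\ell})^{-r_{\ell}/2}$ multiply to a nonzero constant, while $\prod_{\ell}j(M_{\ell},U_{\ell}\tau)^{r_{\ell}/2}$ contributes a factor $(c\tau+\delta)^{\frac12\sum_{\ell}r_{\ell}}=(c\tau+\delta)^{k}$, which exactly cancels the $(c\tau+\delta)^{-k}$ from the slash operator. Thus $(f|_{k}\gamma)(\tau)=(\text{const})\prod_{\ell\mid N}\eta(U_{\ell}\tau)^{r_{\ell}}$ with a nonzero constant. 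Since $U_{\ell}\tau=(g_{\ell}^{2}\tau+g_{\ell}b_{\ell})/\ell$, the product formula $\eta(w)=e^{2\pi iw/24}\prod_{j\ge1}(1-e^{2\pi ijw})$ shows $\eta(U_{\ell}\tau)$ equals a root of unity times $q^{g_{\ell}^{2}/(24\ell)}$ times $1+O(q^{>0})$, so the leading exponent of $(f|_{k}\gamma)(\tau)$ in $q$ is $\frac{1}{24}\sum_{\ell\mid N}\frac{\gcd(c,\ell)^{2}r_{\ell}}{\ell}$. Multiplying by $h=N/\bigl(c\gcd(c,N/c)\bigr)$ to pass to the local uniformizer yields $\mathrm{ord}_{a/c}(f)=\frac{N}{24}\sum_{d\mid N}\frac{\gcd(c,d)^{2}r_{d}}{\gcd(c,N/c)\,c\,d}$, which is the asserted formula; the same statement appears in Ligozat's work and in Martin's paper on multiplicative eta-quotients.

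The only genuinely delicate points are the existence and precise shape of the factorization $M_{\ell}U_{\ell}$ (which rests on the two coprimality observations above) and the verification that all the eta-multipliers together with the half-integral powers of $j(M_{\ell},U_{\ell}\tau)$ recombine into one honest nonzero constant; the latter uses crucially that $k=\frac12\sum_{d\mid N}r_{d}\in\mathbb{Z}$, so that the square roots close up without a branch ambiguity. Everything else is routine manipulation of the $q$-expansion of $\eta$, and the divisibility assumptions $\sum_{d\mid N}dr_{d}\equiv\sum_{d\mid N}(N/d)r_{d}\equiv0\pmod{24}$ enter only through Lemma \ref{etap}, to know that $f$ is a form on $\Gamma_{0}(N)$ in the first place.
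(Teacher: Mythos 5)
Your argument is correct. Note that the paper does not actually prove this lemma --- it simply cites Biagioli, Ligozat and Martin --- and what you have written is a faithful, self-contained reconstruction of the standard derivation in those references: factor each $\begin{pmatrix}\ell a&\ell b\\c&\delta\end{pmatrix}$ as $M_{\ell}U_{\ell}$ with $M_{\ell}\in\mathrm{SL}_{2}(\mathbb{Z})$ and $U_{\ell}$ upper triangular with diagonal $(g_{\ell},\ell/g_{\ell})$, apply the eta transformation law to $M_{\ell}$, read off the leading $q$-exponent $\frac{1}{24}\sum_{d\mid N}\gcd(c,d)^{2}r_{d}/d$, and rescale by the width $h=N/(c\gcd(c,N/c))$. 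The key checks (primitivity of $(\ell a/g_{\ell},c/g_{\ell})$, the cocycle identity giving $j(M_{\ell},U_{\ell}\tau)=(c\tau+\delta)/(\ell/g_{\ell})$, and the recombination of the half-integral powers using $k\in\mathbb{Z}$) are all handled, and your normalization of $\mathrm{ord}_{a/c}$ in the local uniformizer $q^{1/h}$ is exactly the convention the paper uses when it applies the lemma (e.g.\ in Lemma 3.2, where a $q^{1/4}$ leading term at a width-$2$ cusp yields order $\frac{1}{2}$).
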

\begin{proof}
See Biagioli \cite{bi}, Ligozat \cite{lig} or Martin \cite{ma}.
\end{proof}

\begin{proof}[Proof of Theorem \ref{main1}]
Let $m\in\{2,4\}$. Let $\ell_{m}$ be as defined in Theorem \ref{main}.
Consider the functions
$$
f_{m}(\tau)=\frac{F_{k,m}(\tau)}{\left(\theta(\tau)\theta(m\tau)\right)^{k}x_{m}(\tau)^{\ell_{m}}}\quad\mbox{and}\quad
g_{m}(\tau)=\frac{1}{x_{m}(\tau)}.
$$
Both $f_{m}(\tau)$ and $g_{m}(\tau)$ are analytic on the upper half plane $\mathbb{H}$. {Employing the transformation formulas for $E_{k}(\tau)$, $E_{k,\chi_{-2}}^{\infty}(\tau)$, $E_{k,\chi_{-2}}^{0}(\tau)$, $E_{k,\chi_{-4}}^{\infty}(\tau)$ and $E_{k,\chi_{-4}}^{0}(\tau)$, see e.g., \cite{kolberg} and \cite[pp. 79--83]{serre}, and Lemma \ref{etap} we may verify that both $f_{m}(\tau)$ and $g_{m}(\tau)$ are invariant under $\Gamma_{0}(4m)$ and $\begin{pmatrix}0&1\\-4m&0\end{pmatrix}$.
Therefore, both $f_{m}(\tau)$ and $g_{m}(\tau)$ are invariant under $\Gamma_{0}(4m)^{+}$, the group obtained from $\Gamma_{0}(4m)$ by adjoining its Fricke involution $\begin{pmatrix}0&1\\-4m&0\end{pmatrix}$.}  Let us analyze the behavior at $\tau=\infty$. By observing the $q$-expansions, we find that $f_{m}(\tau)$ has rational coefficients, and
$$
f_{m}(\tau)=\frac{1+O(q)}{(1+O(q))^{k}q^{\ell_{m}}(1+O(q))^{\ell_{m}}}=q^{-\ell_{m}}+O(q^{-\ell_{m}+1}).
$$
Therefore $f_{m}(\tau)$ has a pole of order $\ell_{m}$ at $\infty$. Similarly, we note that $g_{m}(\tau)$ has a simple pole at $\tau=\infty$.  It implies that there exist rational constants $a_{1,k,m},\,\ldots,a_{\ell_{m},k,m}$ such that the function
$$
h_{m}(\tau):=f_{m}(\tau)-\sum_{j=1}^{\ell_{m}}a_{j,k,m}g_{m}(\tau)^{j}
$$
has no pole at $\tau=\infty$, that is,
$$
h_{m}(\tau)=a_{0,k,m}+O(q)\quad\mbox{as $\tau\to \infty$}
$$
for some constant $a_{0}$. Let us consider the behavior of $h_{m}(\tau)$ at $\tau=\frac{1}{2}$. By Lemma \ref{ord}, we have
$$
\mbox{ord}_{1/2}(F_{k,m})=\begin{cases}\frac{1}{2}&\mbox{if $m=2$ and $k$ is odd,}\\1&\mbox{if $m=2$ and $k$ is even,}\\2&\mbox{if $m=4$ and $k$ is odd,}\\ 1&\mbox{if $m=4$ and $k$ is even.}\end{cases}
$$
Moreover, by Lemma \ref{order} together with the $\eta$-quotient representation \eqref{thetaeta} of $\theta(\tau)$,
$$
\theta(\tau)=\frac{\eta_{2}^{5}}{\eta_{1}^{2}\eta_{4}^{2}},
$$
 we can compute that 
$$
\mbox{ord}_{\mathfrak{c}_{m}}\left(\theta(\tau)\theta(m\tau)\right)=\begin{cases}\frac{1}{2}&\mbox{if $m=2$ and $\mathfrak{c}_{m}=\frac{1}{2}$,}\\1&\mbox{if $m=4$ and $\mathfrak{c}_{m}=\frac{1}{2}$,}\\
0&\mbox{if $m=4$ and $\mathfrak{c}_{m}=\frac{1}{4}$,}
\end{cases}
$$
and 
$$
\mbox{ord}_{\mathfrak{c}_{m}}(x_{m})=\begin{cases}-1&\mbox{if $m=2$ or $4$ and $\mathfrak{c}_{m}=\frac{1}{2}$,}\\
0&\mbox{if $m=4$ and $\mathfrak{c}_{m}=\frac{1}{4}$.}
\end{cases}
$$
Here $\mathfrak{c}_{m}$ denotes a cusp of $\Gamma_{0}(4m)$.
It is clear that ${\rm ord}_{\mathfrak{c}_{m}}(F_{k,m})\geq0$ for $m=4$ and $\mathfrak{c}_{m}=\frac{1}{4}$. Thus we summarize that
$$
\mbox{ord}_{\mathfrak{c}_{m}}(h_{m})=0\quad\mbox{if $m=2$ or $4$ and $\mathfrak{c}_{m}=\frac{1}{2}$,}
$$
and 
$$
\mbox{ord}_{\mathfrak{c}_{m}}(h_{m})\geq0\quad\mbox{if $m=4$ and $\mathfrak{c}_{m}=\frac{1}{4}$.}
$$
{{Since the set of inequvalent cusps of $\Gamma_{0}(4m)^+$ is  $\{\infty,\,\frac{1}{2}\}$ if $m=2$, or is $\{\infty,\,\frac{1}{2},\,\frac{1}{4}\}$ if $m=4$, it follows that $h_{m}(\tau)$}} is
 holomorphic on $X(\Gamma_{0}(4m)^+)=\Gamma_{0}(4m)^{+}\backslash\mathbb{H}$, and thus $h_{m}(\tau)$ is a constant, that is, $h_{m}(\tau)\equiv a_{0,k,m}$. Moreover, since $\mbox{ord}_{1/2}(h_{m})=0$, $h_{m}(\tau)$ does not vanish at $\frac{1}{2}$ and thus $a_{0,k,m}\ne0$. Therefore, we have
$$
f_{m}(\tau)=\sum_{j=0}^{\ell_{m}}a_{j,k,m}g_{m}(\tau)^{j},
$$
which is equivalent to
$$
F_{k,m}(\tau)=\left(\theta(\tau)\theta(m\tau)\right)^{k}\sum_{j=0}^{\ell_{m}}a_{j,k,m}x_{m}^{\ell-j}=\left(\theta(\tau)\theta(m\tau)\right)^{k}\sum_{j=0}^{\ell_{m}}b_{j,k,m}x_{m}^{j},
$$
where $b_{j,k,m}=a_{\ell_{m}-j,k,m}$. Equating the constant term shows that $b_{0,k,m}=1$. 
{{Now take $b_{j,k,m}=-c_{j,k,m}$ to complete the proof.}}
\end{proof}

\section{Concluding Remarks}
\label{conrem}
In this section, we conclude this work with some remarks on the essence of existence of  \eqref{eq1} and \eqref{eq2}, and an explanation of why the upper indices of the sums on the right hand sides of \eqref{eq1} and \eqref{eq2} cannot be improved further according to the functions $x_{m}(\tau)$ we use.

\begin{enumerate}
\item{
 The essence of existence of \eqref{eq1} and \eqref{eq2} is that for $l\in\{2,4\}$, $X(\Gamma_{0}(4m)^{+})$ is of genus zero, and the function $\frac{1}{x_{m}(\tau)}$ is a Hauptmodul of $X(\Gamma_{0}(4m)^{+})$, i.e., a generator of the function field $\mathbb{C}(X(\Gamma_{0}(4m)^{+}))$. Since the function $\frac{F_{k,m}(\tau)}{(\theta(\tau)\theta(m\tau))^{k}}$ is in $\mathbb{C}(X(\Gamma_{0}(4m)^{+}))$, and the locations of poles of it are the same as the locations of zeros of $\frac{1}{x_{m}(\tau)}$, then  $\frac{F_{k,m}(\tau)}{(\theta(\tau)\theta(m\tau))^{k}}$ is a polynomial in $x_{m}(\tau)$.

In general, for a positive integer $m$, we may obtain identities for $(\theta(\tau)\theta(m\tau))^{k}$ similar to \eqref{eq1} and \eqref{eq2} if we could construct a function $F_{k}(\tau)$ by a linear combination of Eisenstein series of weight $k$ such that $\frac{F_{k}(\tau)}{(\theta(\tau)\theta(m\tau))^{k}}$ is invariant under some genus zero discrete subgroup $\Gamma$ of $\mbox{SL}_{2}(\mathbb{R})$, and could construct a generator $\psi(\tau)$ for the function field $\mathbb{C}(X(\Gamma))$ such that the locations of zeros of $\psi(\tau)$ are the same as that of $(\theta(\tau)\theta(m\tau))^{k}$.\\
}

\item{We now explain that with the Hauptmodul $\frac{1}{x_{m}(\tau)}$ we use in this work, the upper indices $\ell_{m}$ cannot be improved any further, i.e., cannot be smaller. From the proof of Theorem \ref{main1}, we can first note that for $k$ fixed, the size of $\ell_{m}$ is determined by the order of vanishing of the function $F_{k,m}(\tau)$ at $\frac{1}{2}$; the higher the order is, the smaller $\ell_{m}$ will be. Then according to the proof of Lemma \ref{ord}, $\mbox{ord}_{1/2}(F_{k,m})$ is directly related to the definition of $F_{k,m}(\tau)$ as a linear combination of Eisenstein series. Thus it is natural ask whether one could redefine $F_{k,m}(\tau)$ to have higher order at $\frac{1}{2}$. In our cases, this is impossible. For example, for $m=2$, and $k\geq4$ and even, first we know that $(\theta(\tau)\theta(2\tau))^{k}$ and $x_{2}(\tau)$ are modular forms of weight $k$ on $\Gamma_{0}(8)$ with trivial character by Lemma \ref{etap}. Then we must have $F_{k,2}(\tau)$ be a linear combination of Eisenstein series of weight $k$ on $\Gamma_{0}(8)$ with trivial character, thus we must have
$$
F_{k,2}(\tau)=C_{1}E_{k}(\tau)+C_{2}E_{k}(2\tau)+C_{3}E_{k}(4\tau)+C_{4}E_{k}(8\tau)
$$
for some constants $C_{1},\,\ldots,C_{4}$ since $E_{k}(m\tau)$ for $m\in\{1,2,4,8\}$ are linearly independent Eisenstein series of weight $k$ on $\Gamma_{0}(8)$ with trivial character and the dimension of the space spanned by such Eisenstein series is $4$.
According to the proof of Lemma \ref{ord}, in order to have $\mbox{ord}_{1/2}(F_{k,2})\geq1$, we must have $C_{4}=0$. In addition, since $x_{2}(\tau)$ is invariant under $\Gamma_{0}(8)^{+}$, then $\frac{F_{k,2}(\tau)}{(\theta(\tau)\theta(2\tau))^{k}}$ must also be invariant under $\Gamma_{0}(8)^{+}$. Following such modularity, we can deduce that $C_{1}=0$ and $C_{2}=2^{-k/2}C_{3}$, and thus we have
$$
F_{k,2}(\tau)=C_{3}\left(2^{-k/2}E_{k}(2\tau)+E_{k}(4\tau)\right).
$$
However, similar to the proof of Lemma \ref{ord}, we can show that the order of vanishing of $2^{-k/2}E_{k}(2\tau)+E_{k}(4\tau)$ at $\frac{1}{2}$ is 0. This demonstrates our claim for the case $m=2$, and $k\geq4$ and even. For the cases $m=2$ and $k=2$, and $m=2$ or $4$ and $k=1$, the formulas we obtained do not involve any lower order term.  For the other cases, similar arguments can be applied by the facts that 
\begin{enumerate}[(i)]

\item{the space of Eisenstein series of weight $2$ on $\Gamma_0(16)$ is spanned by $mE_{2}(m\tau)-E_{2}(\tau)$ for $m|16$ and $m\ne1$, and $\sum_{n=1}^{\infty}\left(\frac{-4}{n}\right)\sigma_{1}(n)q^{n}$;
}

\item{the space of Eisenstein series of even weight $k\geq4$ on $\Gamma_0(16)$ is spanned by $E_{k}(m\tau)$ for $m|16$ and $\sum_{n=1}^{\infty}\chi_{-4}(n)\sigma_{k-1}(n)q^{n}$;
}

\item{the space of Eisenstein series of odd weight $k\geq3$ on $\Gamma_0(8)$ with character $\chi_{-2}$ is spanned by $E_{k,\chi_{-2}}^{\infty}(\tau)$ and $E_{k,\chi_{-2}}^{0}(\tau)$;}

\item{the space of Eisenstein series of odd weight $k\geq3$ on $\Gamma_0(16)$ with character $\chi_{-4}$ is spanned by $E_{k,\chi_{-4}}^{\infty}(m\tau)$ and $E_{k,\chi_{-4}}^{0}(m\tau)$ for $m|4$.}
\end{enumerate}
}

\end{enumerate}

\end{document}